\newtheorem{thm}{Theorem}[section]
\newtheorem{lmm}[thm]{Lemma}
\theoremstyle{definition}
\theoremstyle{remark}
\begin{document}
%
\title[Oscillatory integrals]
{Asymptotic limit of oscillatory integrals \\
with certain smooth phases}
\author{Joe Kamimoto and Toshihiro Nose}
\address{Faculty of Mathematics, Kyushu University, 
Motooka 744, Nishi-ku, Fukuoka, 819-0395, Japan} 
\email{joe@math.kyushu-u.ac.jp}
\address{
Faculty of Engineering, Fukuoka Institute of Technology, 
Wajiro-higashi 3-30-1, Higashi-ku, Fukuoka, 811-0295, Japan
}
\email{nose@fit.ac.jp}
\maketitle

\begin{abstract}      
We give an exact result about 
the asymptotic limit of an oscillatory integral
whose phase contains a certain flat term. 
Corresponding to the real analytic phase case, 
one can see an essential difference 
in the behavior of the above oscillatory integral.
\end{abstract}

\tableofcontents      

\section{Introduction}

Let us consider  the oscillatory integral:
\begin{equation*}
I_f(t;\varphi)=
\int_{{\mathbb R}^n}
e^{i t f(x)} \varphi(x)dx \quad\quad
t>0, 
\end{equation*}
where 
$f$ and $\varphi$ are real-valued $C^{\infty}$ smooth functions
defined on an open neighborhood $U$ of 
the origin in $\mathbb{R}^n$
and the support of $\varphi$ is compact and is contained in $U$.
Here, $f$ and $\varphi$ are called the {\it phase} and the {\it amplitude}
respectively. 

The oscillatry integral appears in many fields in mathematics 
and the information of its behavior as $t\to\infty$ often 
plays important roles in the respective field
(we only refer to \cite{agv88} and \cite{ste93}). 
Until now, 
many strong results about its behavior have been obtained. 
In particular, Varchenko \cite{var76} shows that
the behavior can be described by using the geometry of the
{\it Newton polyhedron} of the phase 
when the phase is real analytic and satisfies some conditions. 
Here, the Newton polyhedron is an important concept in singularity theory
(see \cite{agv88}).
Later, his result has been improved and generalized in many kinds of cases. 
To be more specific, 
the following result about the asymptotic limit
has been obtained in many cases 
(\cite{sch91}, \cite{dns05}, \cite{ikm10}, 
\cite{im11jfaa}, \cite{ot13}, \cite{ckn13},
\cite{kn13tro}, \cite{kn15tams}, \cite{kn15}, \cite{kn16}, etc.): 
\begin{equation}\label{eqn:1.2}
\lim_{t\to\infty} 
t^{1/d(f)}(\log t)^{-m(f)+1} \cdot 
I_f(t;\varphi)
=C_f(\varphi),
\end{equation}
where 
$d(f)$ and $m(f)$ are simply defined through 
the geometry of the Newton polyhedron 
of $f$ (see \cite{agv88}):
$d(f)$ is a positive number, called the {\it Newton distance} of $f$,
and $m(f)$ is contained in the set $\{1,\ldots,n\}$, 
called the {\it multiplicity} of $d(f)$. 
Moreover, $C_f(\varphi)$ is a constant, which is nonzero when
$\varphi(0)$ is positive and $\varphi$ is nonnegative on $U$.
We remark that 
the constant $C_f(\varphi)$ has been exactly computed in many cases
(\cite{sch91}, \cite{dns05}, \cite{ot13}, \cite{ckn13},
\cite{kn13tro}, \cite{kn15tams}, \cite{kn15}, \cite{kn16}, etc.).   

But, unfortunately, the above result (\ref{eqn:1.2}) 
cannot be extended to the general $C^{\infty}$ smooth case. 
The purpose of this note is to show the following theorem:
\begin{thm}
When 
$f(x_1,x_2)=x_2^q+e^{-1/|x_1|^p}$, where
$p$ is a positive real number and 
$q$ is an integer not less than 2, and the support of 
$\varphi$ is compact, we have
\begin{equation*}
\lim_{t\to\infty} 
t^{1/q}(\log t)^{1/p} \cdot 
\int_{{\mathbb R}^2}
e^{i t f(x_1,x_2)} \varphi(x_1,x_2)dx_1dx_2
=C_{q} \varphi(0,0),
\end{equation*}
where $C_q$ is a nonzero constant defined by  
\begin{equation*}
C_{q}=
\begin{cases}
&4\Gamma(1/q+1)\cdot e^{\frac{\pi}{2q}i}
\quad \mbox{ ($q$ is even);} \\
&4\Gamma(1/q+1)\cdot\cos\frac{\pi}{2q}
\quad \mbox{ ($q$ is odd).}
\end{cases}
\end{equation*}
\end{thm}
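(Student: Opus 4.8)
The plan is to localize the integral near the origin and then exploit the fact that the phase $f(x_1,x_2)=x_2^{q}+h(x_1)$ splits as a function of $x_2$ plus a function of $x_1$, where $h(x_1):=e^{-1/|x_1|^{p}}$ (extended by $h(0):=0$) is a $C^{\infty}$ function all of whose derivatives vanish at $x_1=0$; since $\partial_{x_1}h(x_1)=h(x_1)\,p\,\mathrm{sgn}(x_1)/|x_1|^{p+1}$ is nonzero for $x_1\neq 0$, the unique critical point of $f$ is the origin. First I would fix $\rho\in C^{\infty}_{c}((-\delta,\delta))$ with $\rho\equiv 1$ near $0$, put $\chi(x):=\rho(x_1)\rho(x_2)$, and apply non-stationary phase (repeated integration by parts with $L=(it)^{-1}|\nabla f|^{-2}\nabla f\cdot\nabla$ to a partition of unity away from a small ball, on which $|\nabla f|\geq c_{0}>0$) to get
\[
I_{f}(t;\varphi)=\int_{\mathbb{R}^{2}}e^{itf(x)}\chi(x)\varphi(x)\,dx+O(t^{-N})\qquad(\forall N).
\]
Then, writing $\varphi(x)=\varphi(0,0)+x_{1}A(x)+x_{2}B(x)$ by Hadamard's lemma, the localized integral becomes a main term $\varphi(0,0)\int e^{itf}\chi\,dx$ plus two remainders.

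By Fubini the main term factorizes as $\varphi(0,0)\,G_{q}(t)\,H_{p}(t)$ with $G_{q}(t)=\int_{\mathbb{R}}e^{itx_{2}^{q}}\rho(x_{2})\,dx_{2}$ and $H_{p}(t)=\int_{\mathbb{R}}e^{ith(x_{1})}\rho(x_{1})\,dx_{1}$. For $G_{q}$ I would quote the classical one-dimensional asymptotics for a monomial phase: $G_{q}(t)=c_{q}t^{-1/q}+O(t^{-2/q})$ with $c_{q}=2\Gamma(1/q+1)e^{i\pi/(2q)}$ for $q$ even and $c_{q}=2\Gamma(1/q+1)\cos(\pi/(2q))$ for $q$ odd, so that $C_{q}=2c_{q}$. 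The heart of the matter is the flat-phase estimate, of which $H_{p}$ is the case $g=\rho$: for every $g\in C^{\infty}_{c}((-\delta,\delta))$ one should have
\[
\int_{\mathbb{R}}e^{ith(x_{1})}g(x_{1})\,dx_{1}=2g(0)(\log t)^{-1/p}+o\bigl((\log t)^{-1/p}\bigr)\qquad(t\to\infty).
\]
To prove this I would, by the symmetry $x_1\mapsto-x_1$, reduce to $\int_{0}^{\infty}e^{ith(x_{1})}g(x_{1})\,dx_{1}$ and substitute $w=e^{-1/x_{1}^{p}}$, i.e. $x_{1}=\psi(w):=(\log(1/w))^{-1/p}$ with $\psi'(w)=\frac{d}{dw}(\log(1/w))^{-1/p}=\bigl(p\,w\,(\log(1/w))^{1+1/p}\bigr)^{-1}$, turning the integral into the Fourier-type integral $\int_{0}^{w_{0}}e^{itw}g(\psi(w))\psi'(w)\,dw$. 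Splitting at $w=1/t$: on $(0,1/t)$ one has $e^{itw}=1+O(tw)$ and $g(\psi(w))=g(0)+o(1)$ uniformly, while $\int_{0}^{1/t}\psi'(w)\,dw=\psi(1/t)=(\log t)^{-1/p}$ and $\int_{0}^{1/t}w\,\psi'(w)\,dw=O(t^{-1}(\log t)^{-1-1/p})$, which gives the main contribution $g(0)(\log t)^{-1/p}+o((\log t)^{-1/p})$; on $(1/t,w_{0})$ one integrates by parts once in $e^{itw}$, the boundary term at $1/t$ being $O((\log t)^{-1-1/p})$ and the remaining integral being controlled by $\int_{1/t}^{w_{0}}|\psi''(w)|\,dw=O(t(\log t)^{-1-1/p})$ and $\int_{1/t}^{w_{0}}\psi'(w)^{2}\,dw=O(t(\log t)^{-2-2/p})$ (both via the scaling $w=\tau/t$, splitting at $\tau=\sqrt t$), hence it is $O((\log t)^{-1-1/p})=o((\log t)^{-1/p})$. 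Combining the two half-lines, each contributing $g(0)(\log t)^{-1/p}$ to leading order, yields the estimate; in particular $(\log t)^{1/p}H_{p}(t)\to 2$ and the main term equals $2c_{q}\varphi(0,0)\,t^{-1/q}(\log t)^{-1/p}+o\bigl(t^{-1/q}(\log t)^{-1/p}\bigr)$.

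It remains to show the two remainders are $o(t^{-1/q}(\log t)^{-1/p})$, for which I would use $t^{-2/q}=o(t^{-1/q}(\log t)^{-1/p})$. In $\int_{\mathbb{R}^2}e^{itf}\chi\,x_{2}B\,dx=\int e^{ith(x_{1})}\rho(x_{1})\bigl(\int e^{itx_{2}^{q}}\rho(x_{2})x_{2}B(x_{1},x_{2})\,dx_{2}\bigr)dx_{1}$ the inner $x_{2}$-integral has amplitude vanishing at $x_{2}=0$, hence is $O(t^{-2/q})$ uniformly in $x_{1}$ by one-dimensional stationary phase, and integrating against $\rho(x_{1})$ leaves $O(t^{-2/q})$. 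In $\int_{\mathbb{R}^2}e^{itf}\chi\,x_{1}A\,dx=\int e^{itx_{2}^{q}}\rho(x_{2})P(x_{2},t)\,dx_{2}$ with $P(x_{2},t)=\int e^{ith(x_{1})}\rho(x_{1})x_{1}A(x_{1},x_{2})\,dx_{1}$, all $x_{2}$-derivatives of $P$ are bounded uniformly in $(x_{2},t)$, so one-dimensional stationary phase in $x_{2}$ gives $\int e^{itx_{2}^{q}}\rho(x_{2})P(x_{2},t)\,dx_{2}=c_{q}P(0,t)\,t^{-1/q}+O(t^{-2/q})$; and $P(0,t)=\int e^{ith(x_{1})}\rho(x_{1})x_{1}A(x_{1},0)\,dx_{1}=o((\log t)^{-1/p})$ by the $g(0)=0$ case of the flat-phase estimate. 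Hence both remainders are $o(t^{-1/q}(\log t)^{-1/p})$, and assembling everything gives $t^{1/q}(\log t)^{1/p}I_{f}(t;\varphi)\to 2c_{q}\varphi(0,0)=C_{q}\varphi(0,0)$, the two factors of $2$ in $C_{q}$ coming respectively from the two half-lines of the $x_{1}$-integral and from the $x_{2}$-direction. The step I expect to be the real obstacle is the flat-phase estimate: after the substitution, the threshold $w=1/t$ separates a ``frozen'' region where $t\,h(x_{1})\lesssim 1$ — whose $w$-measure $\psi(1/t)=(\log t)^{-1/p}$ is exactly what produces the logarithmic factor — from an oscillatory tail, and the delicate point is to show rigorously that this tail is of strictly lower order; this is precisely the mechanism by which a flat term forces a $(\log t)^{-1/p}$ correction rather than a pure power of $t$, and is the source of the essential difference from the real-analytic case.
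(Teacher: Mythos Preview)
Your proof is correct, and the key analytic ingredient---the flat-phase estimate for $\int e^{ith(x_1)}g(x_1)\,dx_1$---is exactly what the paper proves as its Lemma~2.1 (your substitution $w=e^{-1/x_1^p}$ and split at $w=1/t$ is the same as the paper's treatment of $L^{(2)}$, while your ``frozen'' region $w<1/t$ corresponds to their $L^{(1)}$). Where you genuinely diverge from the paper is in the two-dimensional organization. You localize by a tensor cutoff $\chi=\rho\otimes\rho$, apply Hadamard's lemma $\varphi=\varphi(0,0)+x_1A+x_2B$, and obtain a main term that factors \emph{exactly} as $\varphi(0,0)\,G_q(t)\,H_p(t)$; the two remainders are then dispatched by one-dimensional stationary phase in $x_2$ together with the $g(0)=0$ case of the flat-phase lemma. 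The paper instead reduces to the first quadrant, splits the $x_1$-integration at $(\log t)^{-1/p}$ already at the two-dimensional level (their $J^{(1)}$, $J^{(2)}$), and---in place of Hadamard's lemma---uses the decomposition $\varphi(x_1,x_2)=e^{-x_2^q}\varphi(x_1,0)-e^{-x_2^q}\beta(x_2)\varphi(x_1,0)+x_2R(x_1,x_2)$, which converts the $x_2$-integral of the main term into the explicit Gaussian-type integral $\int_0^\infty e^{-(1-it)x_2^q}\,dx_2=\Gamma(1/q+1)(1-it)^{-1/q}$. Your route is more modular and quotes the monomial-phase asymptotics for $G_q$ as a black box; the paper's $e^{-x_2^q}$ trick is more hands-on but yields the constant $\Gamma(1/q+1)e^{i\pi/(2q)}$ directly without appealing to a separate stationary-phase lemma. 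Both arguments rest on the same mechanism: the length $\psi(1/t)=(\log t)^{-1/p}$ of the region where $t\,h(x_1)\lesssim 1$ is what produces the logarithmic factor.
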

When $q=2$, 
Iosevich and Sawyer \cite{is97} have given an
estimate from the above: 
$|I_f(t;\varphi)|\leq C t^{-1/2}(\log t)^{-1/p}$,
with $C>0$. 

The above theorem implies that
equality (\ref{eqn:1.2}) does not hold in the above case 
(note that $d(f)=q$ and $m(f)=1$) and, moreover, 
the behavior of the oscillatory integral 
cannot always be determined by the information of
only Newton polyhedron of the phase
when the phase is smooth.
On the other hand,  
the above theorem
shows that for any positive number $\alpha$, 
there exists a phase whose oscillatory integral
satisfies 
$\lim_{t\to\infty} 
t^{1/d(f)}(\log t)^{\alpha} \cdot 
I_f(t;\varphi)
=C \varphi(0,0)$ with $C\neq 0$
in the two-dimensional case.

Throughout this article, 
we sometimes use the symbol:
$X:=\log t$ for brief description.
Moreover, we often use the same symbols $t_0$ and $C$ 
to express various constants 
which are independent of $t$.

\section{Behavior of an associated one-dimensional integral}

To prove Theorem~1.1,
we prepare some auxiliary lemma concerning
about an associated one-dimensional integral.
Let $\psi$ be a smooth function 
defined on $\mathbb R$ whose support 
is compact.
Let $L(t;\psi)$ be the integral defined by
\begin{equation*}
L(t;\psi)=\int_{0}^{\infty}
e^{ite^{-1/x^p}}\psi(x)dx,
\end{equation*}
where $p$ is a positive real number.
Moreover, $L(t;\psi)$ can be written as 
\begin{equation*}
L(t;\psi)=L^{(1)}(t;\psi)+L^{(2)}(t;\psi),
\end{equation*}
with
\begin{equation*}
\begin{split}
&L^{(1)}(t;\psi)=\int_0^{\frac{1}{(\log t)^{1/p}}}
e^{ite^{-1/x^p}}\psi(x)dx, \\
&L^{(2)}(t;\psi)=\int_{\frac{1}{(\log t)^{1/p}}}^{\infty}
e^{ite^{-1/x^p}}\psi(x)dx.
\end{split}
\end{equation*}
The asymptotic behaviors of
the integrals 
$L(t;\psi)$,  $L^{(1)}(t;\psi)$ and $L^{(2)}(t;\psi)$
as $t\to \infty$ 
are seen as follows.

\begin{lmm}
\begin{enumerate}
\item[{\rm (i)}] $$\lim_{t\to\infty}
 (\log t)^{1/p} \cdot L^{(1)}(t;\psi)=\psi(0).$$
\item[{\rm (ii)}]
$$
\lim_{t\to\infty}
 (\log t)^{1/p+1} \cdot L^{(2)}(t;\psi)=
\psi(0)\cdot\int_1^{\infty}\frac{e^{iw}}{w}dw.
$$
\end{enumerate}
In particular, we have
$$
\lim_{t\to\infty}
 (\log t)^{1/p} \cdot L(t;\psi)=\psi(0).
$$
\end{lmm}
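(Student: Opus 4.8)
The plan is to establish (i) and (ii) separately; the final assertion is then immediate, since $L(t;\psi)=L^{(1)}(t;\psi)+L^{(2)}(t;\psi)$ and, by (ii), $(\log t)^{1/p}L^{(2)}(t;\psi)=(\log t)^{-1}\cdot(\log t)^{1/p+1}L^{(2)}(t;\psi)\to 0$. Let $R$ be the supremum of $\mathrm{supp}\,\psi\cap[0,\infty)$ (nonempty without loss of generality, the complementary case being trivial); since $\psi$ is continuous, $\psi(R)=0$, and $t$ is always taken large enough that $(\log t)^{-1/p}<R$. For (i): on $[0,(\log t)^{-1/p}]$ one has $1/x^p\ge\log t$, hence $0\le te^{-1/x^p}\le 1$, and $|e^{i\theta}-1|\le|\theta|$ for real $\theta$ gives
\[
L^{(1)}(t;\psi)=\int_0^{(\log t)^{-1/p}}\psi(x)\,dx+E(t),\qquad |E(t)|\le\sup|\psi|\cdot\int_0^{(\log t)^{-1/p}}te^{-1/x^p}\,dx .
\]
The substitution $y=1/x^p$ turns the last integral into $\tfrac{t}{p}\int_{\log t}^{\infty}e^{-y}y^{-1/p-1}\,dy\le\tfrac1p(\log t)^{-1/p-1}$, so $(\log t)^{1/p}E(t)\to 0$; and since $\psi$ is continuous at the origin, $\int_0^{\delta}\psi(x)\,dx=\psi(0)\delta+o(\delta)$ as $\delta\to 0^+$, whence $(\log t)^{1/p}\int_0^{(\log t)^{-1/p}}\psi(x)\,dx\to\psi(0)$. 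This yields (i).

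For (ii), the idea is to straighten out the phase. The map $w=te^{-1/x^p}$ is a strictly increasing change of variable carrying $[(\log t)^{-1/p},R]$ onto $[1,W]$, with $W:=te^{-1/R^p}$ and inverse $x=(\log(t/w))^{-1/p}$; writing $\sigma=\log(t/w)$ and computing the Jacobian, one gets $L^{(2)}(t;\psi)=c_p\int_1^{W}e^{iw}\psi(\sigma^{-1/p})\sigma^{-(p+1)/p}w^{-1}\,dw$ for an explicit positive constant $c_p$, hence
\[
(\log t)^{1/p+1}L^{(2)}(t;\psi)=c_p\int_1^{W}e^{iw}G_t(w)\,dw,\qquad G_t(w):=\psi(\sigma^{-1/p})\,\frac1w\Bigl(\frac{\log t}{\sigma}\Bigr)^{(p+1)/p}.
\]
I would then fix $M>1$ and split at $w=M$. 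On $[1,M]$ one has $\sigma=\log(t/w)\to\infty$ uniformly, so $G_t(w)\to\psi(0)/w$ uniformly and $\int_1^{M}e^{iw}G_t(w)\,dw\to\psi(0)\int_1^{M}e^{iw}w^{-1}\,dw$. On $[M,W]$ I would integrate by parts in $w$: the boundary term at $W$ vanishes because $\psi(R)=0$, the boundary term at $M$ is $O(1/M)$ uniformly for large $t$, and $\int_M^{W}|G_t'(w)|\,dw=O(1/M)$ uniformly for large $t$. Letting $t\to\infty$ and then $M\to\infty$, and invoking the convergence of the improper integral $\int_1^{\infty}e^{iw}w^{-1}\,dw$, produces the limit asserted in (ii).

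The step I expect to be the real obstacle is the uniform estimate $\int_M^{W}|G_t'(w)|\,dw=O(1/M)$, because $[M,W]$ has length comparable to $t$, so any crude bound there diverges. Differentiating $G_t$ produces terms of the shape $w^{-2}\sigma^{-(p+1)/p}(\log t)^{(p+1)/p}$ together with lower-order ones, and after reverting to the variable $\sigma=\log(t/w)$ the bound reduces to an incomplete-Gamma-type inequality $\int_{R^{-p}}^{\Lambda}e^{\sigma}\sigma^{-1/p-1}\,d\sigma=O(e^{\Lambda}\Lambda^{-1/p-1})$: the point is that $\sigma^{-1/p-1}$ is small exactly where $e^{\sigma}\sim t$ is large, which is what absorbs the prefactor $(\log t)^{1/p+1}$. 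This is also the reason a plain dominated-convergence argument is unavailable --- the limit integrand $\psi(0)e^{iw}/w$ is only conditionally integrable on $[1,\infty)$ --- and hence why the truncation at $M$ together with the integration by parts is needed.
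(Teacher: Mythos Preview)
Your argument is sound and reaches the same conclusions, but by a route that differs in several places from the paper's.

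For (i) the paper makes the substitution $x=[X(u+1)]^{-1/p}$ (with $X=\log t$), which turns $L^{(1)}$ into $(\log t)^{-1/p}$ times an integral over $u\in[0,\infty)$ with an integrable dominating function, and then simply invokes dominated convergence. Your approach---replace $e^{i\theta}$ by $1+O(\theta)$ on the region where $\theta=te^{-1/x^p}\le 1$, then bound the error by a direct incomplete-gamma estimate---is more elementary and avoids the change of variables entirely. Both are short; yours uses only continuity of $\psi$ at the origin, while the paper's needs an integrable majorant but gives the limiting constant via a single clean integral.

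For (ii) the two arguments share the same skeleton: straighten the phase (the paper sets $u=e^{-1/x^p}$, you set $w=tu$), then integrate by parts. The difference is where the IBP is performed and how the limit is extracted. The paper integrates by parts once over the \emph{entire} interval $[1/t,1/2]$; this produces a boundary term $M^{(1)}(t)$ that already has the right size and an integral $M^{(2)}(t)$ whose integrand gains a factor $u^{-2}$ in place of $u^{-1}$. After the further substitution $u=e^{v}/t$ the integrand of $M^{(2)}$ acquires an integrable majorant $Ce^{-v}(1+v/\log 2)^{1/p+1}$, and dominated convergence finishes the job. Your version truncates at $w=M$, uses uniform convergence on $[1,M]$, and integrates by parts only on $[M,W]$, reducing the tail to the incomplete-gamma bound $\int^{\Lambda}e^{\sigma}\sigma^{-1/p-1}\,d\sigma=O(e^{\Lambda}\Lambda^{-1/p-1})$. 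This works (the estimate is standard and your identification of it as the crux is correct), but it trades the paper's single dominated-convergence step for a two-parameter $\limsup$ argument in $t$ and $M$. The paper's route is a bit slicker because the global IBP converts the conditionally convergent situation into an absolutely convergent one in a single stroke.

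One small point: your constant $c_{p}$ equals $1/p$, so your computation actually yields $\tfrac{1}{p}\psi(0)\int_{1}^{\infty}e^{iw}w^{-1}\,dw$ rather than the displayed value; the paper's own derivation has the same dropped factor (it appears in the proof of (i) but is omitted in the displayed form of $L^{(2)}$). This does not affect the main theorem, since (ii) is only used to show that $L^{(2)}$ is of strictly lower order than $L^{(1)}$.
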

\begin{proof}
We may assume that the support of $\psi$ is contained
in $(\frac{-1}{\log 2},\frac{1}{\log 2})$ from the principle of
stationary phase (see \cite{agv88}, \cite{ste93}).

\vspace{.5 em}
{\bf (i).} \quad 
By exchanging the integral variable 
$x$ by $u$ as
\begin{equation*}
x=\dfrac{1}{[X(u+1)]^{1/p}} 
\Longleftrightarrow
u=\dfrac{1}{x^{p} X}-1  \quad\quad (X:=\log t),
\end{equation*}
the integral $L^{(1)}(t;\psi)$ can be written as 
\begin{equation*}
\begin{split}
L^{(1)}(t;\psi)=
\frac{1}{(\log t)^{1/p}} \cdot
\int_0^{\infty}
e^{it^{-u}}
\frac{\psi\left(\frac{1}{[X(u+1)]^{1/p}}\right)}
{(u+1)^{1+1/p}}du.
\end{split}
\end{equation*}
Therefore, the Lebesgue convergence theorem implies
\begin{equation*}
\lim_{t\to\infty}
(\log t)^{1/p}\cdot
L^{(1)}(t;\psi)=\frac{1}{p}\psi(0)\cdot
\int_0^{\infty}
\frac{du}{(u+1)^{1+1/p}}
=\psi(0).
\end{equation*}

\vspace{.5 em}
{\bf (ii)}.\quad 
By exchanging the integral variable $x$ by $u$ as
\begin{equation*}
u=e^{-1/x^p}\Longleftrightarrow
x=\left(\frac{-1}{\log u}\right)^{1/p},
\end{equation*}
the integral  $L^{(2)}(t;\psi)$ can be written as 
\begin{equation}
L^{(2)}(t;\psi)=
\int_{1/t}^{1/2}
e^{itu}\frac{1}{u}\left(\frac{-1}{\log u}\right)^{1/p+1}
\tilde{\psi}(u)du.
\label{eqn:5.1}
\end{equation}
Here, let $\tilde{\psi}$ be the function defined on $[0,1)$
satisfying that  
$\tilde{\psi}(u):=\psi(\left(\frac{-1}{\log u}\right)^{1/p})$
for $u\in (0,1)$ and $\tilde{\psi}(0):=\psi(0)$.
Note that $\tilde{\psi}$ is continuous on $[0,1)$, 
smooth in $(0,1)$ and its support is contained
in $[0,1/2)$.
Applying integration by parts to (\ref{eqn:5.1}),
we have
\begin{equation}
L^{(2)}(t;\psi)=M^{(1)}(t)+M^{(2)}(t),
\label{eqn:5.2}
\end{equation}
with
\begin{equation*}
\begin{split}
&M^{(1)}(t)=
\left[\frac{1}{it}e^{itu}\frac{1}{u}
\left(\frac{-1}{\log u}\right)^{1/p+1}
\tilde{\psi}(u)\right]_{1/t}^{1/2},\\
&M^{(2)}(t)=
-\frac{1}{it}
\int_{1/t}^{1/2}
e^{itu}\frac{d}{du}
\left\{
\frac{1}{u}
\left(\frac{-1}{\log u}\right)^{1/p+1}
\tilde{\psi}(u)
\right\}du.
\end{split}
\end{equation*}
The behaviors of $M^{(1)}(t)$ and $M^{(2)}(t)$ as $t\to\infty$ 
can be seen as follows.

\vspace{.5 em}
(Estimate for $M^{(1)}(t)$.) \quad 

Noticing that the support of $\tilde{\psi}$ 
is contained in $[0,1/2)$, we have
\begin{eqnarray*}
&&M^{(1)}(t)=
\frac{2\tilde{\psi}(1/2)}{(\log 2)^{1/p+1}}
\cdot \frac{e^{it/2}}{it}
+
ie^i\frac{\tilde{\psi}(1/t)}{(\log t)^{1/p+1}}
=ie^i\frac{\tilde{\psi}(1/t)}{(\log t)^{1/p+1}}.
\end{eqnarray*}
Therefore 
\begin{equation}
\lim_{t\to\infty}(\log t)^{1/p+1}M^{(1)}(t)
=ie^i \tilde{\psi}(0)= ie^i \psi(0).
\label{eqn:5.3}
\end{equation}

\vspace{.5 em}

(Estimate for $M^{(2)}(t)$.) \quad 

By a simple computation, 
the integral $M^{(2)}(t)$ can be written as 
\begin{equation}
M^{(2)}(t)=
\frac{-1}{it}
\int_{1/t}^{1/2}
e^{itu}\frac{1}{u^2}
\left(\frac{-1}{\log u}\right)^{1/p+1}
a(u)du,
\label{eqn:5.4}
\end{equation}
where $a$ is a smooth function defined on $(0,1)$ defined by
\begin{eqnarray*}
&&a(u):=\\
&&\left[
-1+\left(\frac{1}{p}+1\right)
\left(\frac{-1}{\log u}\right)
\right]
\psi\left(\left(\frac{-1}{\log u}\right)^{1/p}\right)
+\frac{1}{p}
\psi'\left(
\left(\frac{-1}{\log u}\right)^{1/p}
\right)
\left(
\frac{-1}{\log u}
\right)^{1/p+1}.
\end{eqnarray*} 
Note that $a$ 
can be naturally extended to be continuous on $[0,1)$ 
and its support is contained in $[0,1/2)$.
Moreover, by exchanging the integral variable
$u$ by $v$ as
$$u=\frac{e^v}{t} 
\Longleftrightarrow
v=\log(ut),$$
(\ref{eqn:5.4}) can be rewritten as
\begin{eqnarray}
M^{(2)}(t)
&=&\frac{-1}{it}
\int_0^{\log t-\log 2}
e^{ie^v}
\left(
\frac{t}{e^v}
\right)^{2}
\left(
\frac{-1}{v-\log t}
\right)^{1/p+1} 
a\left(\frac{e^v}{t}\right)
\frac{e^v}{t}dv \nonumber\\
&=&
\frac{i}{X^{1/p+1}}
\int_0^{X-\log 2}
e^{ie^{v}}e^{-v}
\left(
\frac{1}{1-v/X}
\right)^{1/p+1} 
a\left(\frac{e^v}{t}\right)
dv.
\label{eqn:5.5}
\end{eqnarray}
Since 
the following inequality always holds:
$$
\frac{1}{1-v/X}\leq \frac{v}{\log 2}+1
\quad\quad \mbox{for $v\in [0,X-\log 2]$},
$$
the integrand in (\ref{eqn:5.5}) can be estimated as follows.
There exists a positive number $C$  such that 
\begin{equation}
\begin{split}
&\left|
e^{ie^{v}}e^{-v}
\left(
\frac{1}{1-v/X}
\right)^{1/p+1} 
a\left(\frac{e^v}{t}\right)
\right| \\
&\quad\quad\quad 
\leq C e^{-v}
\left(
\frac{v}{\log 2}+1
\right)^{1/p+1}
\quad \mbox{for $v\in (0,X-\log 2)$}.
\end{split}
\label{eqn:5.6}
\end{equation}
Since the right hand side of (\ref{eqn:5.6}) is integrable on $[0,\infty)$,
the Lebesgue convergence theorem implies that
\begin{equation}
\begin{split}
\lim_{t\to\infty}
(\log t)^{1/p+1}\cdot M^{(2)}(t)
&=i\int_0^{\infty} e^{ie^v}e^{-v}a(0)dv\\
&=-i\psi(0)\int_1^{\infty} \frac{e^{iw}}{w^2}dw.
\end{split}
\label{eqn:5.7}
\end{equation}
by exchanging the integral variable $v$ by $w$: 
$w=e^v$.

Putting (\ref{eqn:5.2}), (\ref{eqn:5.3}), (\ref{eqn:5.7})
together,  we obtain (ii) in Lemma~2.1.
Note that integration by parts implies  
$$
i\left(e^i-\int_1^{\infty}\frac{e^{iw}}{w^2}dw
\right)=\int_1^{\infty}\frac{e^{iw}}{w}dw.
$$
\end{proof}


{\it Remarks.} 

\begin{enumerate}
\item
In the above proof of (ii),
integration by parts for
$L^{(2)}(t;\psi)$ is crusial. 
Indeed, the behavior of $M^{(2)}(t)$  
can be more easily understood.
The essential difference between 
$L^{(2)}(t;\psi)$ and $M^{(2)}(t)$ is seen in the powers of $u$
(i.e., $1/u$ in (\ref{eqn:5.1}) and 
$1/u^2$ in (\ref{eqn:5.4}) respectively) and it plays 
useful roles in the above computation.  
\item
The integral in (ii) in Lemma~2.1 seems difficult to express 
its value in more clear form. 
But, by using the integrals:
$$
{\rm si}(z)=-\int_{z}^{\infty}\frac{\sin x}{x}dx, \quad
{\rm Ci}(z)=-\int_{z}^{\infty}\frac{\cos x}{x}dx, \quad
E_n(z)=\int_{n}^{\infty} \frac{e^{-zx}}{x}dx,
$$
the value of the integral can be expressed as  
$-{\rm Ci}(1)-i{\rm si}(1)=E_1(-i)$.
The above integrals are the so-called 
{\it  sine integral},
{\it cosine integral}, 
{\it exponetial integral}, respectively,  
which are some kinds of {\it error functions}.
(See, for example, \cite{tem}, p.6, p.60.)
\end{enumerate}
\section{The proof of Theorem~1.1}

We respectively define the integrals:
\begin{equation*}
\tilde{I}^{(\pm)}(t)
=
\int_0^{\infty}\int_0^{\infty}
e^{it[\pm x_2^q+e^{-1/|x_1|^p}]}
\varphi(x_1,x_2)dx_1dx_2.
\end{equation*}
The integral
$I_f(t;\varphi)$ 
can be written as 
\begin{equation*}
I_f(t;\varphi)
=
\sum_{(\theta_1,\theta_2)\in \{\pm 1,\pm 1\}}
\int_0^{\infty}\int_0^{\infty}
e^{it[\theta_2^q x_2^q+e^{-1/|x_1|^p}]}
\varphi(\theta_1 x_1,\theta_2 x_2)dx_1dx_2.
\end{equation*}
Therefore, in order to prove the theorem, 
it suffices to show 
\begin{equation}
\lim_{t\to\infty}
t^{1/q}(\log t)^{1/p} \cdot \tilde{I}^{(\pm)}(t)
=\Gamma(1/q+1)\cdot e^{\pm\frac{\pi}{2q}i}\cdot 
\varphi(0,0).
\label{eqn:5.8}
\end{equation}
Since the form of $\tilde{I}^{(-)}(t)$ is similar 
to that of $\tilde{I}^{(+)}(t)$,
we only consider the case of the integral 
$\tilde{I}^{(+)}(t)$.

Now, the integral $\tilde{I}^{(+)}(t)$ can be 
devided as follows. 
\begin{equation}
\tilde{I}^{(+)}(t)=
J^{(1)}(t)+J^{(2)}(t),
\label{eqn:5.9}
\end{equation}
with
\begin{equation}
\begin{split}
&J^{(1)}(t)
=
\int_0^{\infty}\int_0^{\frac{1}{(\log t)^{1/p}}}
e^{it[x_2^q+e^{-1/|x_1|^p}]}
\varphi(x_1,x_2)
dx_1dx_2,\\
&J^{(2)}(t)
=
\int_0^{\infty}\int_{\frac{1}{(\log t)^{1/p}}}^{\infty}
e^{it[x_2^q+e^{-1/|x_1|^p}]}
\varphi(x_1,x_2)
dx_1dx_2.
\end{split}
\label{eqn:5.10}
\end{equation}
The behaviors of $J^{(1)}(t)$ and  $J^{(2)}(t)$ as $t\to\infty$
are seen as follows.

\begin{lmm}
\begin{enumerate}
\item[{\rm (i)}] 
$$
\lim_{t\to\infty}
t^{1/q}(\log t)^{1/p} \cdot J^{(1)}(t)
=\Gamma(1/q+1)\cdot e^{\frac{\pi}{2q}i}
\cdot\varphi(0,0).
$$
\item[{\rm (ii)}]
There exist positive numbers $C$ and $t_0$ independent of
$t$ such that 
$$
|J^{(2)}(t)|\leq\frac{C}{t^{1/q}(\log t)^{1/p+1}}
\mbox{ \quad for $t\geq t_0$.}
$$
\end{enumerate}
\end{lmm}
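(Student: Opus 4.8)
Both parts of the lemma rest on factoring $e^{it[x_2^q+e^{-1/|x_1|^p}]}=e^{itx_2^q}\,e^{ite^{-1/x_1^p}}$ on the first quadrant and combining (a) the one-dimensional flat-phase analysis of Lemma~2.1, carried out in the variable $x_1$, with (b) a degenerate stationary-phase analysis in the variable $x_2$. For (b) the relevant fact is that the substitution $s=t^{1/q}x_2$ turns $\int_0^\infty e^{itx_2^q}a(x_2)\,dx_2$ into $t^{-1/q}\int_0^\infty e^{is^q}a(s/t^{1/q})\,ds$, and rotating the contour onto the positive imaginary axis gives $\int_0^\infty e^{is^q}\,ds=\tfrac1q\int_0^\infty e^{is}s^{1/q-1}\,ds=\tfrac1q\Gamma(1/q)\,e^{\frac{\pi}{2q}i}=\Gamma(1/q+1)\,e^{\frac{\pi}{2q}i}$; this is the source of the factor $\Gamma(1/q+1)e^{\frac{\pi}{2q}i}$ in (i). The amplitude left over after the $x_1$-integration is a function of the remaining variable that must be controlled uniformly, and this is the point at which one re-runs, rather than merely quotes, the estimates of Lemma~2.1.

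For (i): since $x_1>0$ on the domain, write $e^{it[x_2^q+e^{-1/|x_1|^p}]}=e^{itx_2^q}e^{ite^{-1/x_1^p}}$; substituting $x_1=1/[X(u+1)]^{1/p}$ (as in the proof of Lemma~2.1(i)) in the $x_1$-integral and $s=t^{1/q}x_2$ in the $x_2$-integral, and choosing $R_0$ so that $\varphi$ vanishes for $|x_2|\ge R_0$, one obtains
\begin{equation*}
t^{1/q}(\log t)^{1/p}\,J^{(1)}(t)=\int_0^{R_0 t^{1/q}} e^{is^q}\,G(t,s)\,ds,
\qquad
G(t,s):=\frac1p\int_0^\infty e^{it^{-u}}\frac{\varphi\!\bigl(\tfrac1{[X(u+1)]^{1/p}},\tfrac{s}{t^{1/q}}\bigr)}{(u+1)^{1+1/p}}\,du .
\end{equation*}
For each fixed $s$ the integrand defining $G(t,s)$ is dominated by $\sup|\varphi|\,(u+1)^{-1-1/p}$, so since $e^{it^{-u}}\to1$ for every $u>0$ the Lebesgue convergence theorem gives $G(t,s)\to\tfrac1p\varphi(0,0)\int_0^\infty(u+1)^{-1-1/p}\,du=\varphi(0,0)$; moreover $|G(t,s)|\le\sup|\varphi|$ and $|\partial_sG(t,s)|\le t^{-1/q}\sup|\partial_{x_2}\varphi|$ uniformly. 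Since $e^{is^q}$ is only conditionally integrable, I would split the $s$-integral at a parameter $R$: on $[0,R]$ the Lebesgue convergence theorem gives $\int_0^R e^{is^q}G(t,s)\,ds\to\varphi(0,0)\int_0^R e^{is^q}\,ds$, while on $[R,R_0t^{1/q}]$ a single integration by parts via $e^{is^q}=(iqs^{q-1})^{-1}\tfrac{d}{ds}e^{is^q}$ — the boundary term at $s=R_0t^{1/q}$ vanishing because $G(t,R_0t^{1/q})=0$, and the remaining terms being controlled by the two uniform bounds above — yields $\limsup_{t\to\infty}\bigl|\int_R^{R_0t^{1/q}}e^{is^q}G(t,s)\,ds\bigr|=O(R^{1-q})$. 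Letting $t\to\infty$ and then $R\to\infty$, and using $\int_0^\infty e^{is^q}\,ds=\Gamma(1/q+1)e^{\frac{\pi}{2q}i}$, gives (i).

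For (ii): by the principle of stationary phase we may assume $\varphi$ is supported in a small neighbourhood of $\{x_1=0\}$; on the complementary region $\partial_{x_1}(x_2^q+e^{-1/x_1^p})=\tfrac{p}{x_1^{p+1}}e^{-1/x_1^p}$ is bounded away from $0$, so integrating by parts in $x_1$ arbitrarily often makes that contribution $O(t^{-N})$ for every $N$, which is negligible. On the remaining piece I would integrate in $x_2$ first: by van der Corput's lemma applied to the phase $x_2^q$ (whose $q$-th derivative equals $q!$), and likewise with $\partial_{x_1}\varphi$ in place of $\varphi$,
\begin{equation*}
H(t,x_1):=\int_0^\infty e^{itx_2^q}\varphi(x_1,x_2)\,dx_2
\quad\text{satisfies}\quad
|H(t,x_1)|+|\partial_{x_1}H(t,x_1)|\le C\,t^{-1/q}
\end{equation*}
uniformly in $x_1$, with $H(t,\cdot)$ smooth and of compact support. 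Then $J^{(2)}(t)=\int_{1/(\log t)^{1/p}}^{\infty} e^{ite^{-1/x_1^p}}\,H(t,x_1)\,dx_1=L^{(2)}(t;H(t,\cdot))$, so running the argument in the proof of Lemma~2.1(ii) — the substitution $u=e^{-1/x_1^p}$, the integration by parts that replaces the factor $1/u$ by $1/u^2$, and the substitution $u=e^v/t$ — while noting that every constant occurring there is controlled by $\|H(t,\cdot)\|_{C^1}\le C\,t^{-1/q}$, gives $|J^{(2)}(t)|\le C\,t^{-1/q}(\log t)^{-1/p-1}$ for $t\ge t_0$, which is (ii).

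The delicate step is the one in (i): the $x_2$-oscillation converges only conditionally, so one cannot apply dominated convergence on all of $[0,\infty)$ and must separate a fixed window $[0,R]$ from the growing tail $[R,R_0t^{1/q}]$ and treat the tail by integration by parts. A secondary point, common to both parts, is that the one-dimensional estimates imported from Lemma~2.1 must be uniform in the spectator variable; this is legitimate because, on inspection of that proof, the constants there depend only on finitely many sup-norms of the amplitude, which are uniformly bounded in the present situation.
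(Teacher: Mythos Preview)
Your argument is correct and complete, but your route differs from the paper's in both parts.

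For (i), the paper does not rescale and split at a fixed~$R$; instead it regularizes the conditionally convergent $x_2$--integral by writing $\varphi(x_1,x_2)=e^{-x_2^q}\varphi(x_1,0)-e^{-x_2^q}\beta(x_2)\varphi(x_1,0)+x_2R(x_1,x_2)$ and thereby splitting $J^{(1)}=K^{(1)}-K^{(2)}+K^{(3)}$. The main term $K^{(1)}$ factors as $L^{(1)}(t;\varphi(\cdot,0))\cdot\int_0^\infty e^{-(1-it)x_2^q}\,dx_2$, the second factor now being absolutely convergent and equal to $\Gamma(1/q+1)(1-it)^{-1/q}$; $K^{(2)}$ is rapidly decreasing, and $K^{(3)}$ (carrying the extra factor $x_2$) is shown to be $O(t^{-2/q}(\log t)^{-1/p})$ by a further dyadic split at $x_2\sim t^{-1/q}$ and two integrations by parts. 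Your direct rescaling plus the $[0,R]$/tail split is more elementary and shorter, at the price of having to handle the conditionally convergent Fresnel-type integral by hand; the paper's device buys absolute convergence of the leading term but needs a longer decomposition to dispose of the remainders.

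For (ii), the paper integrates in $x_1$ first, writing $J^{(2)}(t)=\int_0^\infty e^{itx_2^q}L^{(2)}(t;\varphi(\cdot,x_2))\,dx_2$, then splits at $x_2\sim t^{-1/q}$ and uses a uniform version of Lemma~2.1(ii) together with one integration by parts in $x_2$. You reverse the order: van der Corput in $x_2$ produces $H(t,x_1)$ with $\|H(t,\cdot)\|_{C^1}\le Ct^{-1/q}$, and then the Lemma~2.1(ii) machinery in $x_1$ gives the extra $(\log t)^{-1/p-1}$. Both arguments ultimately rely on the same observation you flag at the end---that the constants in Lemma~2.1 depend only on finitely many sup-norms of the amplitude---and your use of van der Corput packages the $x_2$--analysis more cleanly than the paper's explicit cutoff-and-IBP.
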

From (\ref{eqn:5.9}), 
the above lemma easily implies the equation (\ref{eqn:5.8}).

\section{The proof of Lemma 3.1}

Let us prove Lemma~3.1. 
Let $\alpha$ be a smooth function defined on $\mathbb R$ 
satisfying 
that $\alpha(x)=1$ for $|x|\leq 1$ and 
$\alpha(x)=0$ for $|x|\geq 2$, and let $\beta(x):=1-\alpha(x)$. 

{\bf (i)}.\quad 
Let $P(x_1,x_2):=e^{x_2^q}\varphi(x_1,x_2)$.
It is easy to see 
\begin{equation*}
P(x_1,x_2)
=P(x_1,0)+x_2\int_0^1
\frac{\partial P}{\partial x_2}(x_1,sx_2)ds.
\end{equation*}
By using the functions 
$\alpha$ and $\beta$,
we have
\begin{equation*}
\begin{split}
P(x_1,x_2)
&=
P(x_1,0)-\beta(x_2)P(x_1,0)+x_2 e^{x_2^q}R(x_1,x_2)\\
&=
\alpha(x_2)P(x_1,0)+x_2 e^{x_2^q}R(x_1,x_2),
\end{split}
\end{equation*}
where
$$
R(x_1,x_2)=
e^{-x_2^q}\left(
\frac{1}{x_2}\beta(x_2)P(x_1,0)
+
\int_0^1
\frac{\partial P}{\partial x_2}(x_1,sx_2)ds
\right).
$$
Noticing that the supports of $P(x_1,x_2)$ and 
$\alpha(x_2)P(x_1,0)$ are compact, 
we see that $R$ is a smooth function on 
${\mathbb R}^2$ 
with a compact support.  
Since $\varphi(x_1,x_2)=e^{-x_2^q}P(x_1,x_2)$, 
$\varphi$ can be expressed as 
\begin{equation}
\varphi(x_1,x_2)
=
e^{-x_2^q} \varphi(x_1,0)-
e^{-x_2^q} \beta(x_2)\varphi(x_1,0)+x_2 R(x_1,x_2).
\label{eqn:5.11}
\end{equation}
By substituting (\ref{eqn:5.11}) into (\ref{eqn:5.10}) and 
applying Fubini's theorem, 
the integral $J^{(1)}(t)$ can be expressed as
\begin{equation*}
J^{(1)}(t)=
K^{(1)}(t)-K^{(2)}(t)+K^{(3)}(t),
\end{equation*}
with
\begin{equation}
\begin{split}
&K^{(1)}(t)
=
L^{(1)}(t;\varphi(\cdot,0))
\cdot
\int_0^{\infty}
e^{-[1-it]x_2^q}
dx_2, \\
&K^{(2)}(t)
=
L^{(1)}(t;\varphi(\cdot,0))
\cdot
\int_0^{\infty}
e^{-[1-it]x_2^q}
\beta(x_2)dx_2, \\
&K^{(3)}(t)
=\int_0^{\infty}
e^{itx_2^q}
L^{(1)}(t;R(\cdot,x_2))
x_2dx_2.
\end{split}
\label{eqn:5.12}
\end{equation}
Now, let us investigate the behaviors of the 
above three functions as $t\to\infty$.

\vspace{.5 em}

(Behavior of $K^{(1)}(t)$.)\quad 

First, let us consider the integral
$K^{(1)}(t)$. 
Setting $z=[1-it]^{1/q}x_2$ and noting
that the rapid decay of $e^{-z^q}$ allows 
us to replace the contour $[1-it]^{1/q}\cdot[0,\infty)$
by $[0,\infty)$, we see that 
$$
\int_0^{\infty}
e^{-[1-it]x_2^q}dx_2
=\frac{1}{(1-it)^{1/q}}
\cdot\int_0^{\infty}e^{-z^q}dz
=\frac{\Gamma(1/q+1)}{(1-it)^{1/q}}
=\frac{1}{t^{1/q}}
\frac{\Gamma(1/q+1)}{(1/t-i)^{1/q}}.
$$
On the other hand, 
Lemma~2.1 (i) implies 
\begin{equation*}
\lim_{t\to\infty}
(\log t)^{1/p} \cdot 
L^{(1)}(t;\varphi(\cdot,0))=
\varphi(0,0).
\end{equation*}
Applying the above equalities to (\ref{eqn:5.12}), 
we have 
\begin{equation*}
\begin{split}
\lim_{t\to\infty}
t^{1/q}(\log t)^{1/p}\cdot K^{(1)}(t)
=\Gamma(1/q+1) e^{\frac{\pi}{2q}i}\cdot \varphi(0,0).
\end{split}
\end{equation*}

\vspace{.5 em}
(Estimate of $K^{(2)}(t)$.)

Let $N$ be an arbitrary natural number. 
Applying $N$-times integrations by parts,
we have 
\begin{equation*}
\int_0^{\infty}
e^{-[1-it]x_2^q}
\beta(x_2)dx_2=
\frac{1}{q^N[1-it]^N}
\int_0^{\infty}
e^{-[1-it]x_2^q}
\left(
\frac{\partial}{\partial x_2}\cdot
\frac{1}{x_2^{q-1}}
\right)^N
\beta(x_2)dx_2.
\end{equation*}
A simple computation implies that there exist
positive numbers $t_N$ and $C_N$ such that
$$
\left|
\int_0^{\infty}
e^{-[1-it]x_2^q}
\beta(x_2)dx_2
\right|
\leq \frac{C_N}{t^N} 
\quad\quad \mbox{for $t\geq t_N$.}
$$
Therefore, 
from (\ref{eqn:5.12}) and Lemma~2.1 (i), 
there exist positive numbers $\tilde{t}_N$ and $\tilde{C}_N$ such that
\begin{equation*}
|K^{(2)}(t)|
\leq \frac{\tilde{C}_N}{t^N(\log t)^{1/p}}
\quad\quad \mbox{for $t\geq \tilde{t}_N$.}
\end{equation*}

\vspace{.5 em}

(Estimate of $K^{(3)}(t)$.)

For the proof of (i), it suffices to show that 
the integral $K^{(3)}(t)$ is dominated by 
$Ct^{-2/q}(\log t)^{-1/p}$ for large $t$. 

The integral $K^{(3)}(t)$ can be written as follows:
\begin{equation} 
K^{(3)}(t)=H^{(1)}(t)+H^{(2)}(t),
\label{eqn:5.13}
\end{equation}
with
\begin{equation*}
\begin{split}
&H^{(1)}(t)
=
\int_0^{\infty}
e^{itx_2^q}
L^{(1)}(t;R(\cdot,x_2))\alpha(t^{1/q}x_2)
x_2 dx_2,\\
&
H^{(2)}(t)
=
\int_0^{\infty}
e^{itx_2^q}
L^{(1)}(t;R(\cdot,x_2))\beta(t^{1/q}x_2)
x_2 dx_2,
\end{split}
\end{equation*}
where the functions $\alpha$ and $\beta$ are as 
in the beginning of this subsection. 

Let us investigate 
the behaviors of the functions $H^{(1)}(t)$ and $H^{(2)}(t)$
as $t\to\infty$.

\vspace{.5 em}

(Behavior of $H^{(1)}(t)$.) \quad 

Exchanging the integral variable $x_2$ by $u_2$: 
$u_2=t^{1/q}x_2$, we have
\begin{equation}
\begin{split}
H^{(1)}(t)= 
\frac{1}{t^{2/q}}\int_0^2
e^{iu_2^q}
L^{(1)}(t;R(\cdot,\frac{u_2}{t^{1/q}}))\alpha(u_2)
u_2du_2.
\end{split}
\label{eqn:5.15}
\end{equation}
In order to investigate the behavior of
$L^{(1)}(t;R(\cdot,\frac{u_2}{t^{1/q}}))$ 
as $t\to\infty$, 
consider the following inequality:
\begin{equation}
\begin{split}
&\left|
(\log t)^{1/p}\cdot L^{(1)}(t;R(\cdot,\frac{u_2}{t^{1/q}}))-R(0,0)
\right|\\
&\quad \leq (\log t)^{1/p}
\left|
L^{(1)}(t;R(\cdot,\frac{u_2}{t^{1/q}}))-
L^{(1)}(t;R(\cdot,0))
\right|\\
&\quad\quad\quad +
\left|
(\log t)^{1/p}L^{(1)}(t;R(\cdot,0))-R(0,0)
\right|.
\end{split}
\label{eqn:5.16}
\end{equation}
The first term in the right hand side of (\ref{eqn:5.16}) 
is dominated by 
$$
(\log t)^{1/p}\int_0^{\frac{1}{(\log t)^{1/p}}}
\left|
R(x_1,\frac{u_2}{t^{1/q}})-R(x_1,0)
\right|
dx_1 \quad
\mbox{ for $u_2\in [0,2]$}.
$$
The uniform of the continuity of the function
$R$ implies that the above integral tends to zero 
as $t\to\infty$.
Moreover, from Lemma 2.1 (i), 
the second term 
in the right hand side of (\ref{eqn:5.16})
tends to zero as $t\to\infty$. 
Therefore, we have 
\begin{equation}
\lim_{t\to\infty}
 (\log t)^{1/p} L^{(1)}(t;R(\cdot,\frac{u_2}{t^{1/q}}))=R(0,0)
\mbox{ \quad for $u_2\in[0,2]$}.
\label{eqn:5.16a}
\end{equation}
Note that the limit in (\ref{eqn:5.16a}) 
is uniform with respect to 
$u_2\in[0,2]$. 
Applying the equality (\ref{eqn:5.16a}) to (\ref{eqn:5.15}), 
we can easily get
\begin{equation}
\lim_{t\to \infty}
t^{2/q}(\log t)^{1/p}\cdot
H^{(1)}(t)=R(0,0)\cdot \int_0^{2}
e^{iu_2^q}\alpha(u_2)u_2 du_2.
\label{eqn:5.17}
\end{equation}


\vspace{.5 em}

(Estimate for $H^{(2)}(t)$.) \quad 

By applying two-times integrations by parts 
to the integral $H^{(2)}(t)$, 
$H^{(2)}(t)$ can be written as  
\begin{equation*}
H^{(2)}(t)=
\left(\frac{-1}{qit}\right)^2
\int_0^{\infty} 
e^{itx_2^q}
L^{(1)}(t;F(\cdot,x_2;t))dx_2,
\end{equation*}
where
\begin{equation*}
\begin{split}
F(x_1,x_2;t)= \frac{\partial}{\partial x_2}
\left(\frac{1}{x_2^{q-1}}\cdot\frac{\partial}{\partial x_2}
\left(\frac{1}{x_2^{q-1}}\cdot x_2
R\left(
x_1,x_2
\right)
\beta(t^{1/q}x_2)
\right)\right).
\end{split}
\end{equation*}
A simple computation shows that
there is a positive constant $C$ independent of 
$x_1$ and $t$ such that
\begin{equation}
|F(x_1,x_2;t)|\leq 
\frac{C}{x_2^{2q-1}} \quad \mbox{ for $x_2>0$.}
\label{eqn:5.18}
\end{equation}
Note that $t^{1/q}$ is dominated by $2/x_2$ 
when $t^{1/q}x_2$ is contained in the support of $\beta'$.  
Moreover, there exist positive numbers $t_0$, $C$ such that
\begin{equation}
\begin{split}
&|L^{(1)}(t;F(\cdot,x_2;t))|\leq
\left|
\int_0^{\frac{1}{(\log t)^{1/p}}}
e^{it e^{-1/x_1^p}}F(x_1,x_2;t)dx_1 \right|\\
&\quad\quad 
\leq \int_0^{\frac{1}{(\log t)^{1/p}}} 
|F(x_1,x_2;t)|dx_1 
\leq \frac{C}{x_2^{2q-1}(\log t)^{1/p}}
\quad
\mbox{ for $x_2>0$, $t \geq t_0$}.
\end{split}
\label{eqn:5.19}
\end{equation}
By noticing that $2q-1\geq 3(>1)$ and that 
the support of $F(u_1,\cdot;t)$
is contained in $(t^{-1/q},\infty)$, 
the inequalities in (\ref{eqn:5.19}) imply that
\begin{equation}
\begin{split}
|H^{(2)}(t)|\leq
&\frac{C}{t^2(\log t)^{1/p}}\cdot
\int_{t^{-1/q}}^{\infty}
\frac{1}{x_2^{2q-1}} dx_2 \\
\leq&
\frac{C}{t^2 (\log t)^{1/p}\cdot t^{-2+2/q}}\leq 
\frac{C}{t^{2/q}(\log t)^{1/p}}
\quad
\mbox{ for $t\geq t_0$}.
\end{split}
\label{eqn:5.20}
\end{equation}
We remark that the function $F$ with
the estimate (\ref{eqn:5.18}) 
was obtained by applying {\it integration by parts} 
and it played an important role in the estimate (\ref{eqn:5.20})
of the integral $H^{(2)}(t)$ 
(see also the first remark in the end of Section 2).

\vspace{.5 em}

Putting (\ref{eqn:5.13}), (\ref{eqn:5.17}), (\ref{eqn:5.20}) together, 
we can get the desired estimate:
\begin{equation*}
|K^{(3)}(t)|\leq \frac{C}{t^{2/q}(\log t)^{1/p}}
\quad
\mbox{ for $t\geq t_0$.}
\end{equation*}

\vspace{.5 em}

{\bf (ii)}.\quad 
By using the functions $\alpha$ and $\beta$ in the beginning of 
Section~4, 
the integral $J^{(2)}(t)$ can be devided as 
\begin{equation} 
J^{(2)}(t)=N^{(1)}(t)+N^{(2)}(t),
\label{eqn:5.21}
\end{equation}
with
\begin{equation}
\begin{split}
&N^{(1)}(t)
=
\int_0^{\infty}
e^{itx_2^q}
L^{(2)}(t;\varphi(\cdot,x_2))\alpha(t^{1/q}x_2)
dx_2,\\
&
N^{(2)}(t)
=
\int_0^{\infty}
e^{itx_2^q}
L^{(2)}(t;\varphi(\cdot,x_2))\beta(t^{1/q}x_2)
dx_2.
\end{split}
\label{eqn:5.22}
\end{equation}
To obtain the estimate in (ii) in Lemma 3.1, 
we give appropriate estimates for
$N^{(1)}(t)$ and $N^{(2)}(t)$.

\vspace{.5 em}

(Estimate for $N^{(1)}(t)$.) \quad 

Exchanging the integral variable $x_2$ by $u_2$:
$x_2=u_2/t^{1/q}$, we have
\begin{equation}
N^{(1)}(t)
=
\frac{1}{t^{1/q}}
\int_0^{2}
e^{iu_2^q}
L^{(2)}(t;\varphi(\cdot,\frac{u_2}{t^{1/q}}))\alpha(u_2)
du_2.
\label{eqn:5.23}
\end{equation}

From Lemma~2.1 (ii), 
there exist positive numbers $t_0$, $C$ 
independent of $t$ and $x_2$ such that
\begin{equation*}
|L^{(2)}(t;\varphi(\cdot,x_2))|
\leq \frac{|\varphi(0,x_2)|+C}{(\log t)^{1/p+1}}
\leq \frac{C}{(\log t)^{1/p+1}} 
\quad \mbox{for $t\geq t_0$, $x_2>0$}.
\end{equation*}
Therefore, applying the above estimate to (\ref{eqn:5.23}), 
we have
\begin{equation}
|N^{(1)}(t)|\leq \frac{C}{t^{1/q}(\log t)^{1/p+1}}
\quad \mbox{for $t\geq t_0$}.
\label{eqn:5.24}
\end{equation}

\vspace{.5 em}

(Estimate for $N^{(2)}(t)$.) \quad 

By appying integration by parts to (\ref{eqn:5.22}),
the integral $N^{(2)}(t)$ can be written as  
\begin{equation*}
N^{(2)}(t)
=
\frac{-1}{qit}
\cdot
\int_0^{\infty}
e^{itx_2^q}
L^{(2)}(t;G(\cdot,x_2;t))
dx_2,
\end{equation*}
where 
\begin{equation*}
G(x_1,x_2;t)= 
\frac{\partial}{\partial x_2}
\left(\frac{1}{x_2^{q-1}}
\varphi\left(
x_1,x_2
\right)
\beta(t^{1/q}x_2)
\right).
\end{equation*}
Let 
$\tilde{G}(x_1,x_2;t)=
x_2^{q}G(x_1,x_2;t)$. 
A simple computation 
shows that $\tilde{G}$ is bounded on 
$[0,\infty)^3$.
Note that $t^{1/q}$ is dominated by $2/x_2$ 
when $t^{1/q}x_2$ is contained in the support of $\beta'$.
From Lemma~2.1 (ii), 
\begin{equation*}
\lim_{t\to\infty}
(\log t)^{1/p+1}\cdot L^{(2)}(t;\tilde{G}(\cdot,x_2;t))=
ie^i\cdot \tilde{G}(0,x_2;t).
\end{equation*}
The boundedness of $\tilde{G}$ implies
that there exist positive numbers $t_0$ and  $C$
independent of $t,x_2$ such that 
\begin{equation}
|L^{(2)}(t;\tilde{G}(\cdot,x_2;t))|
\leq \frac{C}{(\log t)^{1/p+1}}
\quad
\mbox{ for $t\geq t_0$, $x_2 > 0$.}
\label{eqn:5.25}
\end{equation}
Therefore, by noticing that 
the support of $F(u_1,\cdot;t)$
is contained in $(t^{-1/q},\infty)$, 
(\ref{eqn:5.25}) implies that
there exist positive numbers $t_0$ and $C$ 
independent of $t$ such that
\begin{equation}
\begin{split}
|N^{(2)}(t)|
&=
\frac{1}{qt}
\left|
\int_0^{\infty}
e^{it x_2^q}
L^{(2)}(t;G(\cdot,x_2;t))
dx_2
\right| \\
&=
\frac{1}{qt}
\left|
\int_0^{\infty}
e^{it x_2^q}
\frac{1}{x_2^q}
L^{(2)}(t;\tilde{G}(\cdot,x_2;t))
dx_2
\right| \\
&\leq
\frac{C}{t}\cdot\int_{t^{-1/q}}^{\infty}
\frac{1}{x_2^q}\frac{1}{(\log t)^{1/p+1}}dx_2 \\
&\leq 
\frac{C}{t} \cdot \frac{1}{t^{1/q-1}}\cdot \frac{1}{(\log t)^{1/q+1}}
=\frac{C}{t^{1/q}(\log t)^{1/p+1}}
\quad
\mbox{ for $t\geq t_0$.}
\end{split}
\label{eqn:5.26}
\end{equation}

Putting (\ref{eqn:5.21}), (\ref{eqn:5.24}), (\ref{eqn:5.26}) together, 
we can get (ii) in Lemma~3.1.




%
%

\end{document}